\address{\newline{\normalsize Laboratory of AGHA, Moscow Institute of Physics and Technology, 9 Institutskiy per., Dolgoprudny,
Moscow Region, 141701, Russia}
\newline{\it E-mail address}: karzhemanov.iv@mipt.ru}
\makeatletter\@addtoreset{equation}{section}\makeatother
\newtheorem{theorem}[equation]{Theorem}
\newtheorem{prop}[equation]{Proposition}
\newtheorem{lemma}[equation]{Lemma}
\newtheorem{theorem-definition}[equation]{Theorem-definition}
\theoremstyle{definition}
\theoremstyle{remark}
\newtheorem{remark}[equation]{Remark}
\makeatletter\@addtoreset{subsection}{equation}\makeatother
\newcommand{\p}{\mathbb{P}}
\newcommand{\cel}{\mathbb{Z}}
\newcommand{\com}{\mathbb{C}}
\newcommand{\af}{\mathbb{A}}
\newcommand{\fie}{{\bf k}}
\begin{document}

\title{On the cut\,-\,and\,-\,paste property of algebraic varieties}

\author{Ilya Karzhemanov}

\begin{abstract}
We show that the above\,-\,named property (after M.\,Larsen and
V.\,Lunts) does not hold in general.
\end{abstract}

\sloppy

\maketitle

\bigskip

\section{Introduction}
\label{section:in}

\refstepcounter{equation}
\subsection{}
\label{subsection:in-1}

Fix some ground field $\fie\subseteq\com$. Consider the ring
$K_0(\text{Var}_{\fie})$ (or simply $K_0(\text{Var})$) of
algebraic $\fie$\,-\,varieties, generated over $\cel$ by
isomorphism classes $[X]$ of all quasi\,-\,projective such $X$,
with multiplication given by $[X]\cdot[Y] := [X\times_{\fie} Y]$
and addition $[X] + [Y]$ being just the formal sum subject to the
relation $[X] = [X\setminus Y] + [Y]$ whenever $Y \subseteq X$ is
a closed subvariety. In particular, the class of a point can be
identified with the unity $1\in K_0(\text{Var}_{\fie})$, so that
$[\af^1\setminus\{0\}] = \mathbb{L} - 1$ for $[\af^1] :=
\mathbb{L}$ and the affine line $\af^1$. Note that
$K_0(\text{Var})$ can also be generated by isomorphism classes
$[S]$ of $\fie$\,-\,\emph{schemes of finite type} $S$, with the
automatic relation $[S] = [S_{\text{red}}]$, where
$S_{\text{red}}$ is the corresponding reduced scheme.

The ring $K_0(\text{Var})$ (or, more precisely, \emph{motivic
measures} with values in $K_0(\text{Var})$ with $\mathbb{L}$
inverted) was used in \cite{max} for example to show (via the
celebrated \emph{motivic integration}) that birationally
isomorphic Calabi\,--\,Yau manifolds have equal Hodge numbers (see
\cite{craw}, \cite{den-loe}, \cite{looij}, \cite{must},
\cite{la-lu}, \cite{la-lu-1} and \cite{ga-shin} for other results,
applications and references). However, the structure of
$K_0(\text{Var})$ is still poorly understood, although it is known
that $K_0(\text{Var})$ is not a domain (see \cite{poo}) and one
has an explicit description of ``\,conical\,'' subrings in
$K_0(\text{Var})$ (see \cite{kol}). See also \cite{la-lu} and
\cite{bondal-la-lu} for a description of quotients of
$K_0(\text{Var})$ by some ideals related to $\mathbb{L}$.

Further, given two $\fie$\,-\,varieties $X$ and $Y$ let us say
(following \cite{la-lu}) that they satisfy the
\emph{cut\,-\,and\,-\,paste property} if there is a decomposition
$X = \displaystyle\coprod_{i=1}^k X_i$ (resp. $Y =
\displaystyle\coprod_{i=1}^k Y_i$), some $k \ge 1$, into disjoint
locally closed subvarieties $X_i \subseteq X$ (resp. $Y_i
\subseteq Y$), with $X_i \simeq Y_i$ for all $1 \le i \le k$. This
implies in particular that $[X] = [Y]$ in $K_0(\text{Var})$. Our
main result is that \emph{the converse does not hold}:

\begin{theorem}
\label{theorem:main} There exist two \emph{smooth projective
$3$\,-\,folds} $X$ and $Y$, satisfying $[X] = [Y]$, but violating
the cut\,-\,and\,-\,paste property.\footnote{~This also answers at
negative Question (b) in \cite[$\bf 3.G'''$]{gro} (cf.
\cite{cha-et-al}).}
\end{theorem}

Recently, after our work appeared online, the paper \cite{bor}
(see also \cite{zah}) has supplied an example showing that
$\mathbb{L}$ is a $0$\,-\,divisor in $K_0(\text{Var})$. In
addition, a pair of \emph{smooth non\,-\,proper
$10$\,-\,dimensional} $X$ and $Y$, violating
cut\,-\,and\,-\,paste, was also constructed in \emph{loc. cit}.
Thus our Theorem~\ref{theorem:main} provides another (tiny)
contribution to this interesting subject (we hope that our
approach is in a way simpler and more direct).

In order to prove Theorem~\ref{theorem:main} it suffices to
exhibit such $X$ and $Y$, not birational to each other, yet
satisfying $[X] = [Y]$. Specifically, the idea is to take
\emph{birationally rigid} $X$ and $Y$ (see e.\,g. \cite{pux}), and
then show that $[X] = [Y]$. For the latter, we want both $X$ and
$Y$ to be ``\,elementary\,'', fibered over $\p^1$, say, in
surfaces all having the same class in $K_0(\text{Var})$ (this is
also in line with \cite{pux}). The equality $[X] = [Y]$ for
certain $X$ and $Y$ can then be established by a direct argument
(see {\ref{subsection:f-1}} and {\ref{subsection:f-2-djdjdj}}
below).

\bigskip

\section{Proof of Theorem~\ref{theorem:main}}
\label{section:f}

\refstepcounter{equation}
\subsection{}
\label{subsection:f-1}

Fix two cubic forms $G := G(x_0,\ldots,x_3)$ and $F :=
F(x_0,\ldots,x_3)$ over $\fie$. We will assume that $G$ is
\emph{generic} and the equation $F = 0$ defines a cubic surface
(in $\p^3$) with an \emph{ordinary double point as the only
singularity}. We also choose $F$ generic among such forms.

Consider the locus $X \subset \p^3\times\p^1$, given by the
equation
\begin{eqnarray}
\label{cubic-sur-x} \alpha(t_0,t_1)G + \beta(t_0,t_1)F = 0
\end{eqnarray}
of bidegree $(3,m)$ for some $m \ge 3$, where $x_i$ (resp. $t_i$)
are projective coordinates on $\p^3$ (resp. $\p^1$). We also
assume both forms $\alpha,\beta$ to be generic.

\begin{lemma}
\label{theorem:x-is-smooth} $X$ is a \emph{smooth} $3$\,-\,fold.
\end{lemma}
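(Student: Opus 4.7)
The plan is to apply Bertini's theorem to the linear system $\mathcal{L}$ of bidegree-$(3,m)$ hypersurfaces in $\p^3\times\p^1$ cut out by equations of the form $\alpha G + \beta F$ as $(\alpha,\beta)$ varies over pairs of binary forms of degree $m$, and then to verify smoothness of $X$ along the base locus of $\mathcal{L}$ by a short direct Jacobian check.

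Concretely, at any fixed $t\in\p^1$ the scalars $(\alpha(t),\beta(t))\in\fie^2$ can be prescribed arbitrarily as $(\alpha,\beta)$ vary, so a point $(x,t)$ lies in every member of $\mathcal{L}$ if and only if $G(x)=F(x)=0$; hence the base locus of $\mathcal{L}$ in $\p^3\times\p^1$ is $B := (V(G)\cap V(F))\times\p^1$. Using the genericity of $G$, I would then check that $V(G)$ is smooth, that $G$ does not vanish at the unique ordinary double point $p_0$ of $V(F)$, and (via Bertini applied to the linear system of cubic sections on $V(F)$) that $V(G)$ meets $V(F)$ transversally; in particular, $\nabla G(x)$ and $\nabla F(x)$ are linearly independent for every $x\in V(G)\cap V(F)$.

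With this transversality in hand, Bertini's theorem (in characteristic zero, which applies since $\fie\subseteq\com$) ensures that for generic $(\alpha,\beta)$ the hypersurface $X = V(\alpha G + \beta F)$ is smooth outside $B$. It remains to verify smoothness of $X$ at the points $(x,t)\in B$. Since $G(x) = F(x) = 0$ at such a point, the partial derivatives of $\alpha G + \beta F$ with respect to $t_0$ and $t_1$ vanish automatically, so smoothness reduces to the assertion $\alpha(t)\nabla G(x) + \beta(t)\nabla F(x) \neq 0$; by the linear independence above this vector vanishes only when $\alpha(t) = \beta(t) = 0$, which does not occur for generic coprime $\alpha,\beta$.

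The main obstacle is really only the transversality statement for $V(G)\cap V(F)$. This is, however, a standard consequence of Bertini's theorem once one observes that a generic cubic $G$ does not pass through $p_0$, so the intersection lies entirely in the smooth locus of $V(F)$, where Bertini applies cleanly.
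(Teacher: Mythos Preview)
Your proposal is correct and follows essentially the same approach as the paper: apply Bertini to the linear system $\{\alpha G+\beta F=0\}$ to reduce to the base locus $(G=F=0)\times\p^1$, then use transversality of $V(G)$ and $V(F)$ (equivalently, that $G,F$ serve as local coordinates near the base curve) together with $(\alpha(t),\beta(t))\ne(0,0)$ to conclude smoothness there. The only difference is cosmetic---you spell out the transversality via a Bertini argument on $V(F)$ and do the Jacobian check in projective coordinates, whereas the paper passes to local analytic coordinates $G,F$.
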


\begin{proof}
Indeed, by Bertini's theorem applied to the linear system of
divisors \eqref{cubic-sur-x} in $\p^3\times\p^1$, all possible
singularities of $X$ can only belong to the surface $\frak{B} :=
(G = F = 0) \subset\p^3\times\p^1$. This surface is \emph{smooth}
because it is isomorphic to $\p^1\times$ [the smooth curve $(G = F
= 0) \subset\p^3]$. In particular, locally near every point of
$\frak{B}$ we may assume both $G,F$ to be some analytic
coordinates on $\p^3\times\p^1$. The claim now easily follows by
taking partial derivatives of \eqref{cubic-sur-x} with respect to
$t_i$ and $G,F$.
\end{proof}

Further, $X$ carries a natural fibration $X \longrightarrow \p^1$
in cubic surfaces, induced by the projection of $\p^3\times\p^1$
onto the second factor. We will refer to $X$ as a \emph{pencil}
(of cubic surfaces).

\refstepcounter{equation}
\subsection{}
\label{subsection:f-1-aks}

Consider the open subset
$$
X_0 := X \setminus (t_0GF = 0)\subset X.
$$
and write $\alpha(1,t) = \displaystyle\sum_{i = 0}^m \alpha_i t^i$
(resp. $\beta(1,t) = \displaystyle\sum_{i = 0}^m \beta_i t^i$) for
some $\alpha_i,\beta_i \in \fie$ and the rational function $t :=
t_1/t_0$ on $X$.

Further, let $\af^{m + 1} =
\text{Spec}\,\fie[y_1,\ldots,y_m,\lambda]$ and define the locus
$\mathcal{X}\subset\p^3\times \af^{m + 1}$ by the equation
\begin{eqnarray}
\label{cubic-sur-x-djd} L_{\alpha}G + (L_{\beta}+\lambda)F = 0,
\end{eqnarray}
where $L_{\alpha} := \alpha_0 + \displaystyle\sum_{i = 1}^m
\alpha_i y_i$ (resp. $L_{\beta} := \beta_0 + \displaystyle\sum_{i
= 1}^m \beta_i y_i$). Let us also consider the open subset
$$
\mathcal{X}_0 := \mathcal{X}\setminus (GF = 0)\subset\mathcal{X}.
$$

Note that $X_0\subset\mathcal{X}_0$ as a closed subset given by
the equations $y_1 = t, y_2 = t^2, \ldots, y_m = t^m, \lambda =
0$. There is a finer relation between $X_0$ and $\mathcal{X}_0$.
Namely, let us define a morphism $\varphi: X_0 \times \af^{m - 1}
\longrightarrow \mathcal{X}_0$ as follows:
\begin{equation}
\label{mor-1} [x_0:\ldots:x_3] \times (t,y_2,\ldots,y_m) \mapsto
[x_0:\ldots:x_3]\times (y'_1,y'_2,\ldots,y'_m,\lambda'),
\end{equation}
with
\begin{equation}
\label{mor-2} y'_1 := t,\ y'_2 := y_2 + t^2,\ \ldots,\ y'_m := y_m
+ t^m,\ \lambda' := \sum_{i = 2}^m (-\beta_i -
\alpha_i\frac{G(x)}{F(x)})y_i,
\end{equation}
where $x := (x_0,\ldots,x_3)$. Direct substitution into
\eqref{cubic-sur-x-djd} shows that in fact $\varphi(X_0 \times
\af^{m - 1}) \subseteq \mathcal{X}_0$. It is also clear that
$\varphi$ is \emph{$1$\,-\,to\,-\,$1$ onto its image}.

\begin{lemma}
\label{theorem:l-1} The equality $[X_0]\cdot\mathbb{L}^m =
[\mathcal{X}_0]$ holds in $K_0(\mathrm{Var})$.
\end{lemma}

\begin{proof}
One can see easily with \eqref{mor-2} that $\varphi(X_0 \times
\af^{m - 1})\subseteq \mathcal{X}_0$ is a \emph{closed subscheme}.
Now, since $\varphi$ is injective, we obtain that varieties $X_0
\times \af^{m - 1}$ and $\varphi(X_0 \times \af^{m -
1})_{\text{red}}$ are birational, having the \emph{same}
underlying topological space. Thus we get
$$
[X_0 \times \af^{m - 1}] = [\varphi(X_0 \times \af^{m -
1})_{\text{red}}] = [\varphi(X_0 \times \af^{m - 1})]
$$
according to the relations in $K_0(\mathrm{Var})$ (see
{\ref{subsection:in-1}}).

Next we ``\,move\,'' the scheme $\varphi(X_0 \times \af^{m - 1})$
over $\mathcal{X}_0$ via $\af^{1} \simeq
\text{Spec}\,\fie[\lambda']$:
$$
[x_0:\ldots:x_3]\times (y'_1,y'_2,\ldots,y'_m,\lambda') \mapsto
[x_0:\ldots:x_3]\times (y'_1 +
\lambda',y'_2,\ldots,y'_m,\lambda'(1 - \beta_1 -
\alpha_1\frac{G(x)}{F(x)})) \in \mathcal{X}_0.
$$
By scaling $t$ one may assume that $\beta_1 = 1$ (cf. the
beginning of {\ref{subsection:f-1-aks}}). Thus we obtain a
$1$\,-\,to\,-\,$1$ morphism $\varphi(X_0 \times \af^{m - 1})
\times \af^1 \longrightarrow \mathcal{X}_0$. This yields
$[\varphi(X_0 \times \af^{m - 1})] \cdot [\af^{1}] =
[\mathcal{X}_0]$ and finally
$$
[X_0]\cdot\mathbb{L}^m = [X_0 \times \af^{m}] = [X_0 \times \af^{m
- 1}] \cdot [\af^{1}] = [\varphi(X_0 \times \af^{m - 1})] \cdot
[\af^1] = [\mathcal{X}_0].
$$
\end{proof}

\begin{remark}
\label{remark:-x-k-djdj} Consider the subsets $\mathcal{X}_0^k :=
(y_{m - k + 1} = \ldots = y_m = 0) \cap \mathcal{X}_0 \subset
\mathcal{X}_0$ for $0 < k < m$. Then, arguing as in the proof of
Lemma~\ref{theorem:l-1} with $y_i = -t^{i}$, $m - k + 1 \le i \le
m$, we get $[X_0]\cdot\mathbb{L}^{m - k} = [\mathcal{X}_0^k]$.
\end{remark}

Set $L_i := t \times \af^{m}\cap(y_i = -t^i)$, $1 \le i \le m$,
and $\mathcal{L}_i := \af^{m} \cap (y_i = 0) \simeq L_i$. Note
that $\displaystyle\bigcup_{i = 1}^m L_i$ is the \emph{union of
coordinate hyperplanes} in $\af^m$.

\begin{lemma}
\label{theorem:l-1-djdjd} $[X_0 \times \displaystyle\bigcup_{i =
1}^m L_i] = [\mathcal{X}_0 \cap \displaystyle\bigcup_{i = 1}^m
\mathcal{L}_i]$.
\end{lemma}

\begin{proof}
For any given $L_i$, $2 \le i \le m$, morphism $\varphi$ is
$1$\,-\,to\,-\,$1$ on $X_0 \times L_i \cap (y_1 = t)$ (cf.
\eqref{mor-1} and \eqref{mor-2}). Then, ``\,moving\,'' as in the
proof of Lemma~\ref{theorem:l-1}, yields a $1$\,-\,to\,-\,$1$
morphism $\varphi(X_0 \times L_i \cap (y_1 = t)) \times \af^1
\longrightarrow \mathcal{X}_0 \cap \mathcal{L}_i$. The latter
extends to a $1$\,-\,to\,-\,$1$ morphism $\varphi(X_0 \times
\displaystyle\bigcup_{i = 2}^m L_i \cap (y_1 = t)) \times \af^1
\longrightarrow \mathcal{X}_0 \cap \displaystyle\bigcup_{i = 2}^m
\mathcal{L}_i$.

Further, given $L_i$, $1 \le i \le m - 1$, we similarly define a
morphism $\psi : X_0 \times \af^{m - 1} \longrightarrow
\mathcal{X}_0$,
\begin{equation}
\nonumber [x_0:\ldots:x_3] \times (y_1,\ldots,y_{m-1},t^m) \mapsto
[x_0:\ldots:x_3]\times (y'_1,\ldots,y'_{m-1}, y'_m,\lambda'),
\end{equation}
with
\begin{equation}
\nonumber y'_1 := y_1 + t,\ \ldots,\ y'_{m-1} := y_{m-1} +
t^{m-1},\ y'_m = t^m,\ \lambda' := \sum_{i = 1}^{m-1} (-\beta_i -
\alpha_i\frac{G(x)}{F(x)})y_i,
\end{equation}
which gives a $1$\,-\,to\,-\,$1$ morphism $\psi(X_0 \times
\displaystyle\bigcup_{i = 1}^{m-1} L_i \cap (y_m = t^m)) \times
\af^1 \longrightarrow \mathcal{X}_0 \cap \displaystyle\bigcup_{i =
1}^{m-1} \mathcal{L}_i$.

Finally, we glue the schemes $A := \displaystyle\bigcup_{i = 2}^m
L_i \cap (y_1 = t)$ and $B := \displaystyle\bigcup_{i = 1}^{m - 1}
L_i \cap (y_m = t^m)$ along $C := \displaystyle\bigcup_{i = 2}^{m
- 1} L_i \cap (y_1 = t) \simeq \displaystyle\bigcup_{i = 2}^{m -
1} L_i \cap (y_m = t^m)$, which gives a scheme $A \times_C B$ and
a $1$\,-\,to\,-\,$1$ morphism $\Phi(X_0 \times A \times_C B)
\times \af^1 \longrightarrow \mathcal{X}_0 \cap
\displaystyle\bigcup_{i = 1}^m \mathcal{L}_i$, $\Phi = (\varphi,
\psi)$. Note also that $(A \times_C B \times \af^1)_{\text{red}} =
\displaystyle\bigcup_{i = 1}^m L_i$ and hence we obtain
$$
[\mathcal{X}_0 \cap \displaystyle\bigcup_{i = 1}^m \mathcal{L}_i]
= [\Phi(X_0 \times A \times_C B)]\cdot[\af^1] = [X_0 \times A
\times_C B \times \af^1] = [X_0] \cdot [A \times_C B \times \af^1]
= [X_0 \times \displaystyle\bigcup_{i = 1}^m L_i].
$$
\end{proof}

\refstepcounter{equation}
\subsection{}
\label{subsection:f-2}

Let us consider another locus
$\widetilde{X}\subset\p^3\times\p^1$, given similarly as $X$ with
the same $G,F$, but having other (still generic) degree $m$ forms
$\widetilde{\alpha}(t_0,t_1),\widetilde{\beta}(t_0,t_1)$ (cf.
{\ref{subsection:f-1}}). All the previous gadgets such as
$\mathcal{X}, X_0$, etc. are defined verbatim for $\widetilde{X}$,
and we will distinguish them by simply putting extra
$\,\widetilde{}\,$. We may (and will) also assume that $X$ and
$\widetilde{X}$ are \emph{not isomorphic} to each other --- just
take the forms $\alpha$ and $\widetilde{\alpha}$ from different
$\textbf{PGL}_2(\com)$\,-\,orbits.

One may assume without loss of generality that the fiber of $X
\longrightarrow \p^1$ (resp. of
$\widetilde{X}\longrightarrow\p^1$) over the point $[0:1]$ is a
\emph{smooth} cubic. Note also that all singular fibers in the
pencils $X$ and $\widetilde{X}$ have the same type of
singularities as the surface $(F = 0)$. Indeed, $X$ for instance
corresponds to a smooth rational curve $C$ in the space of all
cubics in $\p^3$, and $F$ is identified with a generic point in
the (discriminant) locus $\Sigma$ of all such \emph{singular}
cubics. It remains to observe that $C$ can be chosen to intersect
$\Sigma$ at generic points (corresponding to cubics with just one
node).

\begin{lemma}
\label{theorem:sing-fib} Both $X$ and $\widetilde{X}$ have the
\emph{same} number of singular fibers.
\end{lemma}

\begin{proof}
Identify $X$ (resp. $\widetilde{X}$) with the curve $C$ (resp.
$\widetilde{C}$) in the space of cubics as in the above argument.
Recall also that $\Sigma$ is a \emph{hypersurface}. Then the
number of singular fibers in $X$ (resp. in $\widetilde{X}$) equals
the intersection index $\Sigma \cdot C$ (resp. $\Sigma \cdot
\widetilde{C}$). We now have $\Sigma \cdot C = \Sigma \cdot
\widetilde{C}$ for $C$ and $\widetilde{C}$ being (obviously)
homologous.
\end{proof}

\begin{remark}
\label{remark:com-for-s} Alternatively, in the proof of
Lemma~\ref{theorem:sing-fib} one could  use the fact that
$\chi_{\text{top}}(X) = \chi_{\text{top}}(\widetilde{X})$,
$\chi_{\text{top}}$ of the cubic surface $S_G := (G = 0) \subset
\mathbb{P}^3$ equals $9$ and $\chi_{\text{top}} = 8$ for $S_F :=
(F = 0)$. Standard topological argument then gives that the number
of singular fibers in $X$ (resp. in $\widetilde{X}$) equals $18 -
\chi_{\text{top}}(X)$ ($= 18 -
\chi_{\text{top}}(\widetilde{X})$).\footnote{~It is also easy to
compute that this equals $32m$.}
\end{remark}

Recall that $S_G$ is the blowup of $\p^2$ at six points. Hence
$[S_G] = [\p^2] + 6\mathbb{L}$ in $K_0(\text{Var})$. Similarly,
$S_F$ is the blowup of $\p^2$ at six points, followed by the
contraction of the proper transform of a line. This gives $[S_F] =
\mathbb{L}^2 + 6\mathbb{L} + 1$. In particular, both $X' :=
X\setminus\{\text{all singular fibers}\}$ and $\widetilde{X}' :=
\widetilde{X}\setminus\{\text{all singular fibers}\}$ satisfy
$[X'] - [\widetilde{X}'] = [X] - [\widetilde{X}]$, with all fibers
of $X'$ (resp. of $\widetilde{X}'$) having the same class in
$K_0(\text{Var})$. It is tempting to propose at this point that
$[X'] = [\widetilde{X}']$. However, there is no rigorous proof of
this, and instead we argue via Proposition~\ref{theorem:p-1}
below.

\refstepcounter{equation}
\subsection{}
\label{subsection:f-2-djdjdj}

Let us consider the schemes $S := (GF = 0) \cap X$ and $Z := (G =
F = 0) \subset \mathbb{P}^3 \times \mathbb{P}^1$. Note that $S$
equals the union of $Z$ and the \emph{disjoint} union of $m$
copies of surfaces $S_G$ and $S_F$ (cf.
Remark~\ref{remark:com-for-s}). Hence $S$ is independent of $X$
and $\widetilde{X}$.

After all the setup we prove the following:

\begin{prop}
\label{theorem:p-1} $[X]\cdot(\mathbb{L} - 1)^m =
[\widetilde{X}]\cdot(\mathbb{L} - 1)^m$ and $[X]\cdot\mathbb{L}^k
= [\widetilde{X}]\cdot\mathbb{L}^k$ for all $1\le k\le m$.
\end{prop}

\begin{proof}
Firstly, since $$[X] = [X_0] + [\text{the fiber over}\ [0:1]] +
[S] = [X_0] + [S_G] + [S]$$ (same for $\widetilde{X}$), it
suffices to consider $X_0$ and $\widetilde{X}_0$ in place of $X$
and $\widetilde{X}$, respectively.

Further, $\mathcal{X}_0 \simeq \widetilde{\mathcal{X}}_0$ because
one may perform a linear transformation of
$y_1,\ldots,y_m,\lambda$ that brings $L_{\alpha}$ to
$L_{\widetilde{\alpha}}$ and $L_{\beta} + \lambda$ to
$L_{\widetilde{\beta}} + \lambda$. Then Lemma~\ref{theorem:l-1}
gives $[X_0]\cdot \mathbb{L}^{k} = [\widetilde{X}_0]\cdot
\mathbb{L}^{k}$ for $k = m$. The cases of other $0 < k < m$ are
similar
--- take $\mathcal{X}_0^k$ $(\simeq \widetilde{\mathcal{X}}_0^k)$
as in Remark~\ref{remark:-x-k-djdj}.

Finally, we have
$$
[X_0] \cdot(\mathbb{L} - 1)^m = [X_0 \times \af^m \setminus
\displaystyle\bigcup_{i = 1}^m L_i] = [X_0 \times \af^m] - [X_0
\times \displaystyle\bigcup_{i = 1}^m L_i]
$$
in the notation of {\ref{subsection:f-1-aks}}, with $[X_0 \times
\displaystyle\bigcup_{i = 1}^m L_i] = [\mathcal{X}_0 \cap
\displaystyle\bigcup_{i = 1}^m \mathcal{L}_i]$ according to
Lemma~\ref{theorem:l-1-djdjd} (same for $\widetilde{X}_0$ and
$\widetilde{\mathcal{X}}_0$ of course). Note also that
$[\mathcal{X}_0 \cap \displaystyle\bigcup_{i = 1}^m \mathcal{L}_i]
= [\widetilde{\mathcal{X}}_0 \cap \displaystyle\bigcup_{i = 1}^m
\mathcal{L}_i]$ by the inclusion\,-\,exclusion principle because
$\mathcal{X}_0 \cap \mathcal{L}_{i_1} \cap \ldots \cap
\mathcal{L}_{i_k} \simeq \widetilde{\mathcal{X}}_0 \cap
\mathcal{L}_{i_1} \cap \ldots \cap \mathcal{L}_{i_k}$ for all $1
\le i_1 \le \ldots \le i_k \le m$ and $1 \le k \le m$. Thus we
obtain
$$
[X_0] \cdot(\mathbb{L} - 1)^m = [X_0] \cdot \mathbb{L}^m -
[\mathcal{X}_0 \cap \displaystyle\bigcup_{i = 1}^m \mathcal{L}_i]
= [\widetilde{X}_0]\cdot \mathbb{L}^{m}
-[\widetilde{\mathcal{X}}_0 \cap \displaystyle\bigcup_{i = 1}^m
\mathcal{L}_i] = [\widetilde{X}_0 \times \af^m] - [\widetilde{X}_0
\times \displaystyle\bigcup_{i = 1}^m L_i] =
[\widetilde{\mathcal{X}}_0] \cdot(\mathbb{L} - 1)^m
$$
and Proposition~\ref{theorem:p-1} is completely proved.
\end{proof}

Expand $(\mathbb{L} - 1)^m = \mathbb{L}^m - m\mathbb{L}^{m-1} +
\ldots + (-1)^m$ in (commutative) $K_0(\text{Var})$ and multiply
by $[X]$ (resp. by $[\widetilde{X}]$). Then it follows from
Proposition~\ref{theorem:p-1} that $[X] = [\widetilde{X}]$. Hence
the first condition in Theorem~\ref{theorem:main} is satisfied for
the two $3$\,-\,folds.

\begin{lemma}
\label{theorem:non-bir} The $3$\,-\,folds $X$ and $\widetilde{X}$
are \emph{not} birational to each other.\footnote{~Note at the
same time that $X$ and $\widetilde{X}$ are \emph{stably
birational} according to \cite[Corollary 2.6]{la-lu}.}
\end{lemma}

\begin{proof}
The pencils $X$ and $\widetilde{X}$ are particular cases of del
Pezzo fibrations studied in \cite{pux} (cf. our
Lemma~\ref{theorem:x-is-smooth}). Furthermore, $X$ and
$\widetilde{X}$ satisfy two genericity assumptions from \cite[\S
1]{pux}, concerning the pencils of degree $3$ del Pezzo surfaces.
Namely, the assumption about singular fibers (cf. \textsc{Remark}
in \emph{loc. cit}) has been met in {\ref{subsection:f-2}} above,
whereas the \emph{$K^2$\,-\,condition} translates precisely to $m
\ge 3$ in {\ref{subsection:f-1}}. Thus it remains to apply
\cite[Corollary 2.1,\,(i)]{pux}, recalling that $X$,
$\widetilde{X}$ were chosen non\,-\,isomorphic.
\end{proof}

Lemma~\ref{theorem:non-bir} completes the proof of
Theorem~\ref{theorem:main}.

\bigskip

\thanks{{\bf Acknowledgments.} I am grateful to A. Bondal, S. Galkin, M. Kapranov, V. Lunts, B. Poonen, A. Pukhlikov, and J. Sebag
for their interest, helpful discussions and corrections. I also
thank the anonymous referee whose comments and suggestions have
considerably improved the exposition. The work was supported by
World Premier International Research Initiative (WPI), MEXT,
Japan, and Grant\,-\,in\,-\,Aid for Scientific Research (26887009)
from Japan Mathematical Society (Kakenhi), and by the Russian
Academic Excellence Project 5\,-\,100.

\bigskip

\end{document}